\newtheorem{thm}{Theorem}[section]
\newtheorem{cor}[thm]{Corollary}
\newtheorem{lem}[thm]{Lemma}
\theoremstyle{definition}
\newtheorem{defn}[thm]{Definition}
\newtheorem{rem}[thm]{Remark}
\newtheorem{exa}[thm]{Example}
\newcommand{\N}{\ensuremath{\mathbb{N}}}
\newcommand{\Z}{\ensuremath{\mathbb{Z}}}
\newcommand{\bfx}{\ensuremath{\mathbf x}}
\newcommand{\bfy}{\ensuremath{\mathbf y}}
\DeclareMathOperator{\mydig}{dig}   % digits from integer
\DeclareMathOperator{\myint}{int}   % integer from digits
\DeclareMathOperator{\XOR}{XOR}   % integer from digits
\begin{document}

\title{Research Note: Adding Digit Vectors}

\author{Peter Hellekalek\thanks{Research supported by the
University of Salzburg, projects P1884/5-2009 and P1884/4-2010.} }

\date{\today}

\maketitle

\renewcommand{\thefootnote}{}
\footnote{2010 \emph{Mathematics Subject Classification}: Primary 11T71; Secondary 94A60.}
\footnote{\emph{Key words and phrases}: XOR, addition, finite abelian groups, block ciphers, stream ciphers}
\renewcommand{\thefootnote}{\arabic{footnote}}
\setcounter{footnote}{0}

\begin{abstract}
In this paper, we study the different possibilities to add two vectors of digits of a given length $m$.
Our results show that there are at least $2^{m-1}$ different additions of such vectors,
while there exist only two {\em types} of addition that we may employ, addition with carry and addition without carry.
The proofs of our results are elementary.
\end{abstract}

\section{Introduction}\label{intro}
% -------------------------------
In this research note we investigate the different possibilities to add two digit vectors of the same length.

Addition of digit vectors, in particular addition of binary vectors,
is employed in many algorithms.
Prominent examples in applied cryptography are the block ciphers IDEA~\cite{Lai91a} and AES~\cite{Daemen02a}
and several stream ciphers.
We refer the reader to \cite{Menezes97a} and to the eSTREAM-project\footnote{\url{http://www.ecrypt.eu.org/stream/}}
for  details on such ciphers.

The author's starting point into this question was the following observation.
Any construction method for finite or infinite sequences of points 
is based on some arithmetical operations like addition or multiplication,
on a suitable domain.
It is most helpful if the algebraic structure underlying these operations is an abelian
group.
The choice of this group determines which function systems will be suitable
for the analysis of a given sequence,
because the construction method is intrinsically related to
function systems,
via the concept of the {\em dual group} (see Hewitt and Ross~\cite{Hewitt79a}).
Different types of
sequences require different types of function systems for their analysis.
An example of such a suitable ``match'' between sequences and function systems
in the theory of uniform distribution of sequences in the $s$-dimensional unit cube $[0,1)^s$ is given
by Kronecker sequences or,
in their discrete version, good lattice points,
and the trigonometric functions.
This construction method is based on
{\em addition modulo one}
(see Niederreiter~\cite[Ch. 5]{Nie92a} and Sloan and Joe~\cite{Slo94a}).
A second example are digital  nets and sequences and,
as appropriate function system, the Walsh functions.
Here,
{\em addition without carry} of digit vectors comes into play
%where the construction rule involves inner products of digit vectors and, hence,
% is related to the theory of linear codes,
(see Niederreiter  \cite[Ch. 4]{Nie92a} and Dick and Pillichshammer~\cite{Dick10a}).

One important type of a digital sequence,
the Halton sequence,
can also be generated by {\em addition with carry},
the underlying group being the compact abelian group of $b$-adic integers.
From the search for the appropriate function system in this case,
the notion of the $b$-adic function system originated.
This concept was developed in series of papers 
(see \cite{Hel09a,Hel10a,Hel10c,Hel11a,Hel11b}, and for a background in ergodic theory \cite{Hel12b}).

These investigations led to the question if there are any other types of
addition of digit vectors, because if not, then the Walsh functions in base $b$ and the $b$-adic
function system and their combination in a {\em hybrid function system} (see \cite{Hel10c} for this notion)
cover all possible cases of function systems associated with additions of digit vectors.

Our results below show that this is indeed the case:
there are {\em only two types} of addition of digit vectors:
addition without carry, which is also called $\XOR$-addition,
and addition with carry, which is also known as integer addition.

We exhibit that,
for a given length $m$ of the digit vectors with digits in some given integer base $b\ge 2$,
there are at least $2^{m-1}$ different additions for such vectors.
%This large number stems from the fact that while there exist only two types of adding digit vectors of a given length,
%these two types may be applied to subvectors of various lengths $a_1, \ldots, a_r$, with $m=a_1+\dots+a_r$.
This large number may be increased considerably
if we employ also automorphisms of suitable groups of residues.

Our reasoning is elementary.
It is based on a classical theorem on finite abelian groups and on the notion of compositions of positive integers.

The ideas presented below might have applications in cryptography,
for example in stream or block cipher algorithms.
If the information which digits are added in which way in the enciphering scheme is kept secret,
then this will add not only to {\em confusion},
but, as already used in IDEA, changing the type of addition also adds to {\em diffusion}
(for these two notions, see \cite{Shannon49a}).
Hence,
breaking the cipher would be more difficult.

\section{Addition of digit vectors}
% ----------------------------------------------

Let $b\ge 2$ be a fixed integer and let
$\mathcal{A}_b= \{ 0,1, \ldots, b-1\}$ denote the set of $b$-ary digits.
For $m\in \N$,
let $\mathcal{A}_b^m$ stand for
the $m$-fold cartesian product of the set $\mathcal{A}_b$ with itself.

We will study the following question,
mostly in the case $b=p$ a prime:
%\begin{question}
%\label{ch:intro:question:additionofbinaryvectors}
What are the  binary operations ``+'' on the set $\mathcal{A}_b^m$ of  digit vectors such that the pair
$(\mathcal{A}_b^m, +)$ is an abelian group?
%\end{question}

\begin{rem}
In this paper, when we speak of an ``addition on $\mathcal{A}_b^m$'',
we mean a binary operation ``+'' on the set $\mathcal{A}_b^m$ of  digit vectors in base $b$ such that the pair
$(\mathcal{A}_b^m, +)$ is an abelian group.
\end{rem}

The reader should note that the term ``binary'' has two different meanings in this paper,
which will become clear from the context.
A binary operation on a set $G$ is a map from the cartesian product $G\times G$ into $G$.
Referring to the representation of real numbers in base $b=2$,
the elements of the set $\mathcal{A}_2^m$ are called binary vectors,
and for $m=1$ one speaks of binary digits.

Let us consider the case $b=2$ first.
There are two well known examples for addition of digit vectors.
One is addition {\em without} carry and the other is addition {\em with} carry.

For $n\in \N$, $n\ge 2$,
let  $\Z/n\Z$ denote the additive group of residue classes modulo $n$.
We identify this cyclic group with the set of integers $\{0,1, \ldots, n-1\}$ equipped with addition modulo $n$.

\begin{exa}[Addition without carry]
We identify $\mathcal{A}_2$ with $\Z/2\Z$.
For $\bfx, \bfy \in \mathcal{A}_2^m$,
$\bfx = (x_0, \ldots, x_{m-1})$ and $\bfy = (y_0, \ldots, y_{m-1})$,
we define
\[
\bfx + \bfy = (x_0\oplus y_0, \ldots, x_{m-1}\oplus y_{m-1}),
\]
where `$\oplus$' denotes addition on $\Z/2\Z$,
$0\oplus 0=1\oplus 1 =0$, and $0\oplus 1=1\oplus 0 =1$.
The pair $(\mathcal{A}_2^m, +)$ is an abelian group.
In fact,
%the group $(\mathcal{A}_2^m, +)$
it is isomorphic to the product group $(\Z/2\Z)^m$.
We call this binary operation {\em addition without carry},
or  {\em $\XOR$-addition} of digit vectors.
\end{exa}

Any nonnegative integer $k$, $0\le k < 2^m$,
has a unique dyadic representation of the form $k=k_0+k_1 2 + \dots +k_{m-1} 2^{m-1}$
with digits $k_j\in \mathcal{A}_2$, $0\le j\le m-1$.

\begin{exa}[Addition with carry]
We identify $\mathcal{A}_2^m$ with the group $\Z/2^m\Z$.
For $\bfx\in \mathcal{A}_2^m$,
$\bfx = (x_0, \ldots, x_{m-1})$ ,
we define the map $\myint_2: \mathcal{A}_2^m \rightarrow \Z/2^m \Z$,
\[
\myint_2(\bfx) = x_0+ x_1 2+\dots + x_{m-1}2^{m-1}.
\]
Further, let $\mydig_2: \Z/2^m \Z \rightarrow \mathcal{A}_2^m$,
\[
\mydig_2(k) = (k_0,  k_1, \ldots , k_{m-1}),
\]
where $k=k_0 + k_1 2 + \dots + k_{m-1}2^{m-1}$ is the representation of $k$ in base 2.
Finally,
for $\bfx, \bfy \in \mathcal{A}_2^m$,
%$\bfx = (x_0, \ldots, x_{m-1})$ and $\bfy = (y_0, \ldots, y_{m-1})$,
we define
\[
\bfx + \bfy = \mydig_2( \myint_2(\bfx) + \myint_2(\bfy) \pmod{2^m}).
\]

With this binary operation the pair $(\mathcal{A}_2^m, +)$ is an abelian group.
Clearly,
it is isomorphic to the additive group $\Z/2^m \Z$.
We call this type of binary operation {\em  addition with carry} or {\em integer addition} of digit vectors.
\end{exa}

For $m\ge 2$,
our two examples are non-isomorphic groups,
because one is cyclic and the other is not.

Apart from these two examples,
are there any other possibilities to define addition on the set $\mathcal{A}_2^m$
of binary digit vectors?

From the Fundamental Theorem for Finite Abelian Groups (see \cite[Sec. 10]{Herstein99a}) we obtain the following corollary.
In this context, a {\em partition} \index{partition} of a positive integer $m$ is a
finite sequence $(t_i)_{i=1}^r$, $r\in \N$, of positive integers
with the two properties
(i)  $t_1 \ge t_2 \ge  \dots \ge t_r$, and
(ii) $t_1+t_2+ \dots + t_r = m$.

\begin{cor}
\label{ch:intro:cor:FTFAG}
The non-isomorphic groups of order $2^m$, $m\in \N$, are given by
the product groups
%\begin{equation}\label{eqn:caseb=2}
\[
(\Z/2^{t_1}\Z) \times (\Z/2^{t_2}\Z) \times \dots \times (\Z/2^{t_r}\Z),
\]
%\end{equation}
where $(t_i)_{i=1}^r$ is a partition of $m$.
\end{cor}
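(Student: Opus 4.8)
The plan is to read the statement as a classification of the abelian groups of order $2^m$ up to isomorphism (this is the relevant reading, since the corollary is extracted from the Fundamental Theorem \emph{for Finite Abelian Groups} and is applied to classify abelian addition structures on $\mathcal{A}_2^m$), and to deduce it directly from that theorem in its primary-decomposition form: every finite abelian group is isomorphic to a direct product of cyclic groups of prime-power order, and this decomposition is unique up to the ordering of the factors.

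For the existence half, I would take an arbitrary abelian group $G$ with $|G| = 2^m$ and apply the theorem to write $G \cong (\Z/q_1\Z) \times \dots \times (\Z/q_k\Z)$, where each $q_i$ is a prime power. Comparing orders gives $q_1 q_2 \cdots q_k = 2^m$, so by unique factorization in $\Z$ each $q_i$ must itself be a power of $2$, say $q_i = 2^{a_i}$ with $a_i \ge 1$. The order condition then becomes $2^{a_1 + \dots + a_k} = 2^m$, i.e. $a_1 + \dots + a_k = m$. Since permuting the factors of a direct product leaves the isomorphism type unchanged, I may relabel so that $a_1 \ge a_2 \ge \dots \ge a_k$; setting $t_i := a_i$ and $r := k$, the sequence $(t_i)_{i=1}^r$ is a partition of $m$ in the sense defined above, and $G$ is isomorphic to the displayed product group. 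Thus every abelian group of order $2^m$ arises from some partition of $m$.

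What remains, and what I expect to be the only genuine point rather than bookkeeping, is the uniqueness half: distinct partitions must yield non-isomorphic groups, so that the partitions of $m$ parametrize the isomorphism classes without repetition. The clean route is to invoke the uniqueness clause of the Fundamental Theorem, by which the multiset of prime-power orders of the cyclic factors is an isomorphism invariant; for the groups at hand this multiset is exactly $\{2^{t_1}, \dots, 2^{t_r}\}$, and it recovers the partition. If a self-contained argument for this invariance is preferred, I would instead count, for each $j \ge 0$, the number of elements $x \in G$ satisfying $2^j x = 0$. For the product group this count equals $\prod_{i=1}^r 2^{\min(j,t_i)}$, and the successive ratios of these counts reveal, for every $j$, how many parts $t_i$ are at least $j$, which determines the partition completely. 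Since each such count is manifestly preserved under any group isomorphism, two groups coming from different partitions cannot be isomorphic, and the classification follows.
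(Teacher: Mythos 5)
Your proposal is correct and follows the paper's own route: the paper derives this corollary directly from the Fundamental Theorem for Finite Abelian Groups (citing Herstein) without further argument, and you simply spell out the specialization to order $2^m$ --- existence via the primary decomposition and uniqueness via the theorem's uniqueness clause. Your optional self-contained uniqueness argument, counting solutions of $2^j x = 0$ to recover the partition, is a sound refinement but goes beyond what the paper does, which is to invoke the classical theorem as a black box.
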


Hence, in view of Corollary \ref{ch:intro:cor:FTFAG},
%a binary operation ``+'' on
an addition on the set $\mathcal{A}_2^m$
%such that the pair $(\mathcal{A}_2^m, +)$ is a abelian group
is defined if we put
\begin{equation}\label{eqn:2-adic}
(\mathcal{A}_2^m, +) \cong (\Z/2^{t_1}\Z) \times (\Z/2^{t_2}\Z) \times \dots \times (\Z/2^{t_r}\Z),
\end{equation}
where $m=t_1+ t_2 +\dots + t_r$ is a partition of $m$.
Here,
the symbol ``$\cong$'' denotes that the two groups are isomorphic.

As a consequence,
there are at least as many possibilities to define addition
%a binary operation ``+''
on the set $\mathcal{A}_2^m$ of binary digit vectors of length $m$,
%such that the pair $(\mathcal{A}_2^m, +)$ is a abelian group,
as there are different partitions of the integer $m$.

From the structure of the factors in (\ref{eqn:2-adic}) we obtain the following information.
\begin{cor}\label{cor:types}
The only two types of binary operations on (sub)vectors of digits that may appear in the group
law of the abelian group $(\mathcal{A}_2^m, +)$ are the following:
\begin{itemize}
\item addition given by finite product groups of the form
$(\Z/2\Z)\times \dots \times (\Z/2\Z)$,
%\[
%(\Z/2\Z)^t = (\Z/2\Z)\times \dots \times (\Z/2\Z),
%\]
%where $t\in \N$ indicates the number of factors,
which is what we have called $\XOR$-addition, or
\item addition in groups of residues of the form $\Z/2^t\Z$, $t\ge 2$,
which we have called integer addition.
\end{itemize}
\end{cor}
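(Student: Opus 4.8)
The plan is to read both types of operation directly off the structural decomposition provided by Corollary~\ref{ch:intro:cor:FTFAG}. First I would apply that corollary to the abelian group $(\mathcal{A}_2^m, +)$, which has order $2^m$: there is a partition $(t_i)_{i=1}^r$ of $m$ and a group isomorphism
\[
(\mathcal{A}_2^m, +) \cong (\Z/2^{t_1}\Z) \times (\Z/2^{t_2}\Z) \times \dots \times (\Z/2^{t_r}\Z).
\]
An isomorphism transports the operation ``$+$'' to the componentwise operation on the right-hand side, so the group law is completely described by the operations on the individual cyclic factors $\Z/2^{t_i}\Z$. It therefore suffices to examine the type of each such factor.

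Next I would split the factors according to the size of $t_i$. For every index $i$ with $t_i = 1$ the factor is $\Z/2\Z$, and the operation on the corresponding coordinate is addition modulo $2$, that is, the $\XOR$-addition of a single digit from the example of addition without carry. Gathering all such indices, say $k$ of them, their product is $(\Z/2\Z) \times \dots \times (\Z/2\Z)$ with $k$ copies, and the induced operation on this subvector of $k$ digits is precisely $\XOR$-addition. For every index $i$ with $t_i \ge 2$ the factor is $\Z/2^{t_i}\Z$, and by the construction in the example of addition with carry the operation on the associated block of $t_i$ digits is integer addition. Since each cyclic factor falls into exactly one of these two cases, the only operations appearing in the group law are $\XOR$-addition on the block of $t_i = 1$ coordinates and integer addition on each block with $t_i \ge 2$, which is the claim.

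The step that I expect to require the most care is matching the \emph{abstract} cyclic factor $\Z/2^{t_i}\Z$ with the \emph{concrete} addition-with-carry map on a block of $t_i$ digits. The Fundamental Theorem supplies only an abstract isomorphism, so one must verify that, under a suitable labelling of the coordinates, the operation on each factor is literally the map $\mydig_2(\myint_2(\cdot) + \myint_2(\cdot) \pmod{2^{t_i}})$ of the example of addition with carry, and not merely an abstract isomorphic copy of $\Z/2^{t_i}\Z$. Fixing a generator of each cyclic factor and reading off the corresponding digit positions makes this identification routine, so no genuine difficulty remains beyond the classification already supplied by Corollary~\ref{ch:intro:cor:FTFAG}.
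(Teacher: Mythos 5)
Your proposal is correct and follows essentially the same route as the paper, which obtains Corollary~\ref{cor:types} by reading the two types of operation directly off the cyclic factors in the decomposition (\ref{eqn:2-adic}) supplied by Corollary~\ref{ch:intro:cor:FTFAG}, with factors $\Z/2\Z$ giving $\XOR$-addition and factors $\Z/2^t\Z$, $t\ge 2$, giving integer addition. Your extra care in matching each abstract cyclic factor to the concrete $\mydig_2(\myint_2(\cdot)+\myint_2(\cdot) \bmod 2^{t})$ operation on a block of digits is a reasonable elaboration of a point the paper leaves implicit.
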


Denote the number of different partitions of $m$ by $P(m)$.
We refer to the monograph \cite[Ch. 2.5.1]{HDCM00a} for details on the partition function $P$
like tables, or for results on its asymptotic behavior.
%like the following asymptotic behavior:
%\[
%P(m) \sim \frac{1}{4m\sqrt{3}}\E^{\pi \sqrt{2m/3}}.
%\]

For example, if $m=8$,
then there are $P(8)=22$ non-isomorphic groups of order $2^{8}$,
like the groups $\Z/2^8\Z$, $(\Z/2^7\Z)\times (\Z/2\Z)$,
$(\Z/2^6\Z)\times (\Z/2^2\Z)$, and so on.
Among these 22 non-isomorphic groups of order $2^8$,
let us choose for illustration the group
\[
(\Z/2^{3}\Z) \times (\Z/2^{2}\Z)     \times (\Z/2\Z)^3 .
\]
What addition on $\mathcal{A}_2^8$ does this group induce?
Addition of two bytes $\bfx=(x_0,  \ldots, x_7)$ and
$\bfy=(y_0, \ldots, y_7)$ is carried out as follows.
The first three bits of $\bfx$ and $\bfy$ are interpreted as the three binary digits
of two integers in the range $\{0,1, \ldots, 7\}$.
These two integers are added, the resulting integer is reduced modulo $2^3$,
which gives an integer in the range from 0 to 7,
and the three binary digits of this integer give the first three digits of the sum
$\bfx + \bfy$.
In other words,
for the first three bits,
we carry out addition in the group $\Z/2^3\Z$ of residue classes modulo $2^3$.
The same procedure,
which we have called integer addition,
is applied to the next two bits.
For the last three bits, the digit vectors $(x_5,x_6,x_7)$ and $(y_5, y_6, y_7)$ are
$\XOR$-ed, because we have to perform addition in the group $(\Z/2\Z)^3$.

Observe that this is not the complete answer to our question.
Our question concerned
the different possibilities  to add two binary vectors of length $m$.
In the definition of such an addition,
the position of each digit matters,
whereas in
Corollary \ref{ch:intro:cor:FTFAG} the order of the factors does not.
The two groups
\[
(\Z/2^{3}\Z) \times (\Z/2^{2}\Z) \times (\Z/2\Z)^3
\]
and
\[
(\Z/2\Z)^3 \times (\Z/2^{2}\Z) \times (\Z/2^{3}\Z)
\]
are isomorphic,
but they induce different additions on $\mathcal{A}_2^8$.
In the first case, the first three digits of $\bfx+\bfy$ are computed via integer addition,
in the second case these three digits are computed by the $\XOR$-operation.

In terms of representing a number $m\in  \N$,
this means we consider the representation $8=3+2+1+1+1$ to be different from $8=1+1+1+2+3$,
because they induce different additions on $\mathcal{A}_2^8$.
This leads us to the following definition.

\begin{defn}
A {\em composition} of a positive integer $m$ is a
finite sequence of positive integers $(t_i)_{i=1}^r$, $r\in \N$,
with the  property $m=t_1+t_2+ \dots + t_r$.
\end{defn}

Two such sequences which differ in the order of their summands are deemed to be different compositions,
while they would be considered to be the same partition of $m$.
The following result and its nice proof are well known.

\begin{lem}
\label{lem:compositions}
Let $C(m)$ denote the number of different compositions of $m\in \N$.
Then
\[
C(m)= 2^{m-1}.
\]
\end{lem}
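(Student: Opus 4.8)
The plan is to exhibit an explicit bijection between the compositions of $m$ and the subsets of an $(m-1)$-element set, from which $C(m)=2^{m-1}$ follows at once, since a set with $m-1$ elements has exactly $2^{m-1}$ subsets. First I would picture $m$ as a row of $m$ tokens, each standing for the integer $1$, and single out the $m-1$ ``gaps'' between consecutive tokens, labelling them $1,2,\dots,m-1$. The guiding idea is that a composition of $m$ records a way of grouping these tokens into consecutive blocks: placing a separator in a gap ends one summand and begins the next, so that the summands $t_1,t_2,\dots,t_r$ are precisely the block sizes, read from left to right. Choosing independently, in each of the $m-1$ gaps, whether or not to place a separator should therefore correspond to choosing a composition.

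To make this precise, I would send a composition $(t_i)_{i=1}^r$ to the set of its proper partial sums $\{t_1,\ t_1+t_2,\ \dots,\ t_1+\dots+t_{r-1}\}$. I would verify that each such partial sum lies strictly between $0$ and $m$, so that it is one of the labels in $\{1,\dots,m-1\}$, and that these partial sums are strictly increasing, hence distinct; the final sum $t_1+\dots+t_r=m$ is correctly discarded. Conversely, given a subset $\{s_1<s_2<\dots<s_{r-1}\}$ of $\{1,\dots,m-1\}$, I would recover a composition by taking consecutive differences, $t_1=s_1$, $t_i=s_i-s_{i-1}$ for $2\le i\le r-1$, and $t_r=m-s_{r-1}$, with the empty subset corresponding to the single-part composition $(m)$. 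A short check shows these two assignments are mutually inverse, so the map is a bijection.

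The only point needing genuine care, and the closest the argument comes to an obstacle, is confirming that the two maps truly invert one another and that the boundary cases behave: the empty subset must match exactly the composition with $r=1$, and no partial sum may equal $0$ or $m$, which holds precisely because each $t_i$ is a \emph{positive} integer. Once this is settled, counting the subsets of an $(m-1)$-element set yields $C(m)=2^{m-1}$, as claimed.

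As an alternative I would record that the same value follows from the recursion $C(m)=\sum_{t=1}^{m}C(m-t)$, obtained by conditioning on the first summand $t_1=t$ (with the convention $C(0)=1$ for the empty composition), together with an easy induction using $1+\sum_{k=1}^{m-1}2^{k-1}=2^{m-1}$.
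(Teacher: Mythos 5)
Your proposal is correct and takes essentially the same approach as the paper: the paper's proof also views $m$ as a row of $m$ 1's with $m-1$ boxes (your ``gaps''), each independently filled by a plus sign or a comma, which is exactly your bijection with subsets of an $(m-1)$-element set. You simply make the bijection explicit via partial sums and consecutive differences, a level of detail the paper leaves implicit.
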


\begin{proof}
The case $m=1$ is trivial.
Let $m\ge 2$.
In the scheme
\[
1 \Box 1 \Box \ldots \Box 1 \Box 1,
\]
of $m$ 1's and $m-1$ boxes,
we may replace every box either by  a plus sign or by a comma.
A different choice for each of the $m-1$ boxes leads to a different composition of $m$.
\end{proof}

We may summarize our findings as follows.

\begin{thm}\label{thm:2-additions}
For the set  $\mathcal{A}_2$ of binary digits and for $m\in \N$,
the following holds for all binary operations ``+'' on the set $\mathcal{A}_2^m$ such that
the pair $(\mathcal{A}_2^m, +)$ forms an abelian group:

\begin{enumerate}
\item There are only two types of addition of (sub)vectors of binary digits,
addition without carry and addition with carry.
\item The number of different additions %binary operations ``+''
on $\mathcal{A}_2^m$
that arise  from the compositions of $m$
% such that the pair
%$(\mathcal{A}_2^m, +)$ is a abelian group,
is equal to $2^{m-1}$.
\end{enumerate}
\end{thm}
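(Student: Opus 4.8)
The plan is to assemble the two assertions from the structural results already established. Assertion~(1) is immediate: by Corollary~\ref{cor:types}, every abelian group law on $\mathcal{A}_2^m$ splits, along the cyclic factors supplied by Corollary~\ref{ch:intro:cor:FTFAG}, into blocks on which the operation on subvectors is either $\XOR$-addition (the factors $\Z/2\Z$) or integer addition (the factors $\Z/2^t\Z$ with $t\ge 2$), and no other type of operation on subvectors can arise. For assertion~(2) the real work is to turn the informal discussion preceding the definition of a composition into a precise count, and the point to be proved is that the passage from compositions to additions is injective.

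First I would make the map from compositions to additions explicit. Given a composition $(t_i)_{i=1}^r$ of $m$, partition the positions $0,1,\dots,m-1$ into the $r$ consecutive blocks of lengths $t_1,t_2,\dots,t_r$, and define $\bfx+\bfy$ by performing, on the $i$-th block, integer addition inside $\Z/2^{t_i}\Z$: interpret the $t_i$ bits of that block as an integer via $\myint$, add modulo $2^{t_i}$, and read the resulting bits back via $\mydig$. This realizes $(\mathcal{A}_2^m,+)$ as the product group $(\Z/2^{t_1}\Z)\times\dots\times(\Z/2^{t_r}\Z)$ acting blockwise on the coordinates, so it is an abelian group, and each composition yields one such addition.

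The key step, and the main obstacle, is to show this assignment is injective, i.e.\ that distinct compositions induce distinct operations. I would recover the composition from the operation alone by reading off the orders of the standard unit vectors. For $0\le j\le m-1$ let $\mathbf{e}_j\in\mathcal{A}_2^m$ denote the vector with a single $1$ in position $j$. If position $0$ lies in a block of length $t_1$, then $\mathbf{e}_0$ corresponds to the residue $1$ in the factor $\Z/2^{t_1}\Z$, and repeated addition of $\mathbf{e}_0$ stays inside that block, since the carries wrap modulo $2^{t_1}$ and never reach another block; hence $\ord(\mathbf{e}_0)=2^{t_1}$, which recovers $t_1$. The first block therefore occupies positions $0,\dots,t_1-1$, the next block begins at position $t_1$, and the same argument gives $\ord(\mathbf{e}_{t_1})=2^{t_2}$, recovering $t_2$. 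Iterating until the positions are exhausted reconstructs the full composition $(t_i)_{i=1}^r$ from the group law. Note that these orders are positional quantities, so the reconstructed data is the ordered sequence of block lengths, and not merely the underlying partition; this is exactly what distinguishes, say, the addition from $(1,2)$ from the addition from $(2,1)$, even though the two product groups are abstractly isomorphic. Consequently two compositions that induce the same addition must coincide, and the map is injective.

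Finally, injectivity identifies the set of additions arising from compositions of $m$ with the set of compositions itself, whose cardinality is $C(m)=2^{m-1}$ by Lemma~\ref{lem:compositions}; this gives the count in assertion~(2). The only genuinely substantive point is the recovery argument of the preceding paragraph, while the direct-product construction and the bookkeeping of positions are routine.
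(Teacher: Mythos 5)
Your proposal is correct and follows essentially the same route as the paper: part (1) is read off from Corollary~\ref{cor:types} (itself a consequence of Corollary~\ref{ch:intro:cor:FTFAG}), and part (2) reduces the count to Lemma~\ref{lem:compositions} via the blockwise product-group construction in (\ref{eqn:2-adic}). The one place you go beyond the paper is the injectivity of the map from compositions to additions: the paper argues this only informally, by the example that $(\Z/2^{3}\Z)\times(\Z/2^{2}\Z)\times(\Z/2\Z)^3$ and its reversal induce different additions on $\mathcal{A}_2^8$, whereas your iterative recovery of the block lengths from the group law --- $\ord(\mathbf{e}_0)=2^{t_1}$, then $\ord(\mathbf{e}_{t_1})=2^{t_2}$, and so on --- turns that remark into an actual proof, and the iteration is genuinely needed (the order of a unit vector in mid-block, such as $\mathbf{e}_1$ under the composition $(2)$ versus $(1,1)$, does not by itself identify the block structure). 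So: same skeleton as the paper, but with the distinctness claim, which the paper leaves at the level of an illustrative example, made rigorous.
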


The preceeding arguments may be generalized directly to the case of an arbitrary prime base $p$ instead of
base 2.

\begin{cor}
Let $p$ be a prime.
% and let $\mathcal{A}_p$ denote the set  $\{0,1, \ldots, p-1  \}$ of $p$-adic digits.
Then there exist only two types of addition for vectors of $p$-ary digits,
addition without carry, which corresponds to addition on finite product groups of the form $(\Z/p\Z)\times \dots \times(\Z/p\Z)$,
and addition with carry, which corresponds to groups of the form $\Z/p^t\Z$, $t\ge 2$.
Further,
for every $m\in \N$,
the number of additions
%different binary operations ``+''
on $\mathcal{A}_p^m$ that arise from the compositions of $m$
%such that the pair $(\mathcal{A}_p^m, +)$ is a abelian group,
is equal to $2^{m-1}$.
\end{cor}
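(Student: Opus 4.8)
The plan is to reproduce the argument behind Theorem~\ref{thm:2-additions} essentially verbatim, with the prime $p$ in place of $2$ throughout; the only input that changes is the appeal to the Fundamental Theorem for Finite Abelian Groups. First I would record the $p$-analog of Corollary~\ref{ch:intro:cor:FTFAG}: since $|\mathcal{A}_p^m| = p^m$ and $p$ is prime, any abelian group whose underlying set is $\mathcal{A}_p^m$ has prime-power order, so by the Fundamental Theorem it is isomorphic to a product
\[
(\Z/p^{t_1}\Z) \times (\Z/p^{t_2}\Z) \times \dots \times (\Z/p^{t_r}\Z),
\]
where $(t_i)_{i=1}^r$ is a partition of $m$. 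This is the exact analog of~\eqref{eqn:2-adic}, and the restriction to a prime base $p$ (rather than a general base $b$) is precisely what guarantees that every cyclic factor has the form $\Z/p^{t}\Z$.

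Next I would read off part~(1) from the shape of the factors, exactly as in Corollary~\ref{cor:types}. Each factor $\Z/p^{t}\Z$ is a cyclic $p$-group, and only two cases occur. If $t=1$, the factor is $\Z/p\Z$, and a product of such factors gives addition without carry: on each single digit one adds modulo $p$ with no interaction between positions, which is $\XOR$-addition of $p$-ary digits. If $t\ge 2$, the factor $\Z/p^{t}\Z$ governs a block of $t$ consecutive digits, read as the base-$p$ digits of an integer in $\{0,\dots,p^{t}-1\}$; addition in this factor is addition modulo $p^{t}$, that is, integer addition with carry. Since the Fundamental Theorem admits no other kind of factor, these are the only two types.

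For part~(2) I would repeat the passage from partitions to compositions described after Corollary~\ref{cor:types}. The position of each digit matters in defining an addition, whereas the order of the factors in the decomposition does not; hence an addition is specified not by a partition but by an ordered decomposition $m = t_1 + t_2 + \dots + t_r$, that is, by a composition of $m$. Each composition assigns consecutive blocks of digits of lengths $t_1,\dots,t_r$ to the cyclic factors $\Z/p^{t_1}\Z,\dots,\Z/p^{t_r}\Z$ and thereby induces an addition on $\mathcal{A}_p^m$. By Lemma~\ref{lem:compositions} there are $C(m)=2^{m-1}$ such compositions, and the count follows.

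The one point that deserves care — and which I expect to be the only real obstacle — is the injectivity of the assignment sending a composition to its induced addition, since the stated count of $2^{m-1}$ \emph{different} additions requires that distinct compositions never yield the same binary operation. I would establish this by recovering the composition from the operation itself: two consecutive digit positions lie in a common cyclic block precisely when addition can produce a carry between them, so that ``$+$'' restricted to those two coordinates fails to be the componentwise operation modulo $p$. Thus the carry pattern of ``$+$'' determines the block boundaries, and hence the ordered lengths $t_1,\dots,t_r$, uniquely. Concretely, adding a single coordinate generator to itself until it overflows reveals whether the carry reaches the neighbouring position, and distinct compositions place these overflow boundaries differently. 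With injectivity in hand, the number of additions arising from the compositions of $m$ equals $C(m)=2^{m-1}$.
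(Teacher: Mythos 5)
Your proposal is correct and follows essentially the paper's own route: the paper ``proves'' this corollary merely by remarking that the base-$2$ arguments (the Fundamental Theorem as in Corollary~\ref{ch:intro:cor:FTFAG}, the two types of factors as in Corollary~\ref{cor:types}, the passage from partitions to compositions, and Lemma~\ref{lem:compositions}) generalize verbatim to a prime base $p$, which is exactly what you carry out. Your final injectivity argument --- recovering the block boundaries of the composition from the carry pattern of ``$+$'' --- is a valid extra step that the paper leaves implicit (it only illustrates distinctness by an example in base $2$), and it is in fact needed to justify the word \emph{equal} in the count $2^{m-1}$.
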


In case of a composite integer base $b$,
$b\ge 2$,
the situation is more complicated,
because from the factorization of $b$ into distinct prime powers many `small' cyclic groups arise in
the Fundamental Theorem for Finite Abelian Groups that cannot directly be related to operations on the $b$-adic digits,
as we did above.
One will have to use the Chinese Remainder Theorem to treat these cases.
We omit these technical details because they do not contribute any new aspects to our investigation.

%Let $\mathcal{A}_b=\{0,1, \ldots, b-1\}$ denote the set of $b$-adic digits.
Clearly,
every  composition of the positive integer $m$ defines an addition on $\mathcal{A}_b^m$,
by simply following the recipes given above.
Hence, the number of different binary operations ``+'' on $\mathcal{A}_b^m$
such that the pair $(\mathcal{A}_b^m, +)$ is a abelian group
 is at least $2^{m-1}$.

In detail,
the composition $m=t_1 + \dots + t_r$
gives rise to the addition on $\mathcal{A}_b^m$ defined by the following product group:
\begin{equation}\label{eqn:b-adic}
(\mathcal{A}_b^m, +) \cong (\Z/b^{t_1}\Z) \times (\Z/b^{t_2}\Z) \times \dots \times (\Z/b^{t_r}\Z).
\end{equation}

% ---
\section{Combination with automorphisms}

In cryptographic applications of the ideas above,
for example in stream ciphers,
the information which of the $m$ digits are added by $\XOR$-addition and which by integer addition
might become part of the key in the encryption scheme.

We will increase the key space considerably by the following idea.
Suppose that we have chosen the composition $m=t_1+\dots + t_r$ of $m$.
Hence,
we obtain the group law on $\mathcal{A}_b^m$ from the product group given in  (\ref{eqn:b-adic}).
For a given factor $\Z/b^t \Z$ of this product
we may combine  integer addition with an arbitrary automorphism $\sigma$ of the group  $\Z/b^t \Z$ as follows.
For $\bfx, \bfy\in \mathcal{A}_b^t$, $\bfx=(x_0, \ldots, x_{t-1})$ and $\bfy=(y_0, \ldots, y_{t-1})$,
these $t$ digits are added by the law
\begin{equation}\label{eqn:sigma-addition}
\bfx +\bfy = \mydig_b\left( \sigma(\myint_b(\bfx)) + \sigma(\myint_b(\bfy)) \pmod{b^t}\right).
\end{equation}

\begin{lem}\label{lem:automorphisms}
There are $\varphi(b^t)$ different ways to define the addition in (\ref{eqn:sigma-addition}).
\end{lem}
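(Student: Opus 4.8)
The plan is to exhibit a bijection between the automorphisms of the cyclic group $\Z/b^t\Z$ and the additions produced by (\ref{eqn:sigma-addition}), and then simply to count the automorphisms. Two standard facts drive the argument. First, the automorphisms of a cyclic group $\Z/n\Z$ are exactly the multiplication maps $x\mapsto ux$ with $u$ a unit modulo $n$, so $\mathrm{Aut}(\Z/n\Z)$ is isomorphic to the unit group $(\Z/n\Z)^\times$ and therefore has $\varphi(n)$ elements. Taking $n=b^t$ produces exactly $\varphi(b^t)$ admissible automorphisms $\sigma$, which is already the number claimed in the lemma. Second, the maps $\myint_b$ and $\mydig_b$ are mutually inverse bijections between $\mathcal{A}_b^t$ and $\Z/b^t\Z$. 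It therefore remains only to verify that distinct automorphisms $\sigma$ give rise to distinct binary operations, so that the number of additions coincides with the number of automorphisms.

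The hard part, such as it is, will be this injectivity. First I would take two automorphisms $\sigma$ and $\tau$ that induce the same operation through (\ref{eqn:sigma-addition}), and specialise the resulting identity by setting $\bfy=\bfzero$ while leaving $\bfx$ arbitrary. Since $\myint_b(\bfzero)=0$ and every group automorphism fixes $0$, the defining formula collapses to $\mydig_b(\sigma(\myint_b(\bfx)))=\mydig_b(\tau(\myint_b(\bfx)))$ for all $\bfx\in\mathcal{A}_b^t$. Because $\mydig_b$ is injective and $\myint_b$ maps onto $\Z/b^t\Z$, this forces $\sigma(a)=\tau(a)$ for every $a\in\Z/b^t\Z$, that is $\sigma=\tau$. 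Hence no two distinct automorphisms can define the same addition.

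Combining the two ingredients, the assignment sending an automorphism $\sigma$ to the addition defined by (\ref{eqn:sigma-addition}) is a bijection between $\mathrm{Aut}(\Z/b^t\Z)$ and the set of additions under consideration, whence the number of such additions equals $|\mathrm{Aut}(\Z/b^t\Z)|=\varphi(b^t)$. The only genuine subtlety is the injectivity step, since a priori several automorphisms might collapse to the same binary operation; the device of feeding the zero vector $\bfzero$ into one slot strips off the outer application of $\mydig_b$ and lets one read off $\sigma$ directly, which is precisely what closes that gap.
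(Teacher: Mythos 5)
Your proof is correct and its core is the same as the paper's: the paper's entire proof is the standard observation that any endomorphism $\sigma$ of $(\Z/b^t\Z,+)$ satisfies $\sigma(a)=a\,\sigma(1)$ and is an automorphism exactly when $\sigma(1)$ is a unit, so the automorphisms correspond to $(\Z/b^t\Z)^*$ and number $\varphi(b^t)$. Your extra injectivity step --- setting $\bfy=\bfzero$ in (\ref{eqn:sigma-addition}) to strip off $\mydig_b$ and recover $\sigma$ pointwise --- does not appear in the paper, which tacitly identifies ``ways to define the addition'' with choices of $\sigma$; your argument actually justifies the word ``different'' in the lemma, so it is a slightly more complete version of the same approach.
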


\begin{proof}
The following reasoning is standard.
Let $\sigma$ be an homomorphism of the additive group $\Z/b^t \Z$ into itself.
Then $\sigma(a) = a \sigma(1)$ for all $a\in \Z/b^t \Z$.
Hence, $\sigma$ is an automorphism if and only if $(a, b^t)=1$.
In other words, $a$ belongs to the (multiplicative) group of prime residues $(\Z/b^t \Z)^*$ modulo $b^t$,
which has $\varphi(b^t)$ elements.
\end{proof}

\begin{thm}\label{thm:p-adic}
Let $p$ be a prime.
Then, for any $m\in \N$,
the compositions of $m$ and the automorphisms of the associated  groups of residues
generate
\[
(p-1) (2p-1)^{m-1}
\]
different additions on $\mathcal{A}_p^m$,
i.e., binary operations ``+''  such that the pair
$(\mathcal{A}_p^m, +)$ is an abelian group.
\end{thm}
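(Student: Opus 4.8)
The plan is to reduce the statement to a weighted count over the compositions of $m$ and then to evaluate that count with an ordinary generating function. Fix a composition $m = t_1 + \cdots + t_r$. By (\ref{eqn:b-adic}) it induces the product group $(\Z/p^{t_1}\Z)\times\cdots\times(\Z/p^{t_r}\Z)$, and by Lemma~\ref{lem:automorphisms} the $i$-th factor $\Z/p^{t_i}\Z$ carries exactly $\varphi(p^{t_i})$ admissible additions of the form (\ref{eqn:sigma-addition}). Since the automorphism attached to one block of digits does not interact with the others, the choices are independent across the $r$ factors, so this one composition produces $\prod_{i=1}^r \varphi(p^{t_i})$ additions on $\mathcal{A}_p^m$. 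Granting that distinct data -- a composition together with a choice of automorphism on each of its factors -- yield distinct binary operations on $\mathcal{A}_p^m$, the total number $N(m)$ to be computed is
\[
N(m) = \sum_{\substack{t_1+\cdots+t_r=m\\ t_i\ge 1}} \ \prod_{i=1}^r \varphi(p^{t_i}).
\]

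The main, and really the only delicate, point is this injectivity, i.e.\ that we are not over-counting. Different compositions give different additions because, as already observed after Corollary~\ref{ch:intro:cor:FTFAG}, the position of each digit matters: the block boundaries $t_1,\, t_1+t_2,\,\dots$ can be read off from which coordinates are coupled by carries, so the ordered tuple $(t_1,\dots,t_r)$ is recovered from the operation. Within a fixed composition, two automorphism tuples that differ in some factor already differ on that factor by Lemma~\ref{lem:automorphisms}, and hence define different additions on $\mathcal{A}_p^m$. I would write this out factor by factor; everything after it is routine arithmetic.

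It remains to evaluate $N(m)$. Using $\varphi(p^t)=p^{t-1}(p-1)$ for $t\ge 1$, I introduce the ordinary generating function
\[
f(x)=\sum_{t\ge 1}\varphi(p^t)\,x^t = (p-1)x\sum_{t\ge 1}(px)^{t-1} = \frac{(p-1)x}{1-px}.
\]
Because $N(m)$ is the sum over all compositions of $m$ of the multiplicative weight $\prod_i \varphi(p^{t_i})$, its generating function is the sequence (composition) transform of $f$, so that
\[
\sum_{m\ge 1} N(m)\,x^m = \sum_{r\ge 1} f(x)^r = \frac{f(x)}{1-f(x)} = \frac{(p-1)x}{1-(2p-1)x}.
\]
Expanding the right-hand side as a geometric series gives $\sum_{m\ge1}(p-1)(2p-1)^{m-1}x^m$, and comparing coefficients yields $N(m)=(p-1)(2p-1)^{m-1}$, as claimed.

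As an independent check, in place of generating functions I would split each composition according to its first part $t_1=j$, which gives the recurrence $N(m)=\sum_{j=1}^{m}\varphi(p^j)\,N(m-j)$ with $N(0)=1$, and then confirm $N(m)=(p-1)(2p-1)^{m-1}$ by induction on $m$: the base case $m=1$ is $\varphi(p)=p-1$, and the inductive step reduces to summing a finite geometric series.
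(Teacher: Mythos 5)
Your proof is correct, and it starts from exactly the same decomposition as the paper: an addition is specified by a composition $m=t_1+\cdots+t_r$ together with, via Lemma~\ref{lem:automorphisms}, one of $\varphi(p^{t_i})$ automorphisms for each factor $\Z/p^{t_i}\Z$, giving the weighted sum $N(m)=\sum_{t_1+\cdots+t_r=m}\prod_{i=1}^r\varphi(p^{t_i})$. Where you diverge is in evaluating that sum. The paper notes that the weight $\prod_{i=1}^r\varphi(p^{t_i})=p^m\left(\frac{p-1}{p}\right)^r$ depends only on the number of parts $r$, counts the compositions of $m$ into $r$ parts as $\binom{m-1}{r-1}$ by the box-filling argument of Lemma~\ref{lem:compositions}, and closes with the binomial theorem: $p^m\sum_{r=1}^m\binom{m-1}{r-1}\left(\frac{p-1}{p}\right)^r=(p-1)(2p-1)^{m-1}$. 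You never use this special structure; instead you apply the composition transform $f/(1-f)$ to $f(x)=\sum_{t\ge1}\varphi(p^t)x^t=\frac{(p-1)x}{1-px}$, and your algebra checks out, as does your recurrence $N(m)=\sum_{j=1}^m\varphi(p^j)N(m-j)$ with $N(0)=1$. The paper's route is more elementary (only the binomial theorem) and ties directly into its earlier count of compositions; yours is more robust, since the generating-function argument would handle any multiplicative weight, not just one that collapses to a function of $r$, and it comes with an independent inductive verification. One further point in your favor: you explicitly flag the injectivity question --- that distinct pairs (composition, automorphism tuple) must yield distinct binary operations for the count to be exact --- and sketch how to recover the composition from the carry pattern and separate automorphism tuples factor by factor. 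The paper's proof is silent on this and simply multiplies the counts, implicitly taking the ``different ways'' of Lemma~\ref{lem:automorphisms} at face value, so on this delicate point your write-up is, if anything, more careful than the source.
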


\begin{proof}
For a given composition $m=t_1+\dots+t_r$ of $m$ into $r$ components,
$1\le r\le m$,
we obtain the group law from
\begin{equation}\label{eqn:p-adic-composition}
(\mathcal{A}_p^m, +) \cong (\Z/p^{t_1}\Z) \times (\Z/p^{t_2}\Z) \times \dots \times (\Z/p^{t_r}\Z).
\end{equation}
Due to Lemma \ref{lem:automorphisms},
this group product allows
\[
p^m \left( \frac{p-1}{p} \right)^r
\]
automorphisms.

The number of compositions of $m$ with $r$ terms is equal to the number of possibilities to
chose $r-1$ from the available $m-1$
places to put a comma in the proof of Lemma \ref{lem:compositions}.
For this reason, there exist
\[m-1\choose r-1
\]
compositions of $m$ into $r$ terms.

Hence,
the total number of different additions on $\mathcal{A}_p^m$
that stem from compositions of $m$ and the associated automorphisms is given by
\[
p^m \sum_{r=1}^m {m-1 \choose r-1} \left( \frac{p-1}{p}\right)^r = (p-1) (2p-1)^{m-1}.
\]
\end{proof}

\begin{exa}
Let $p=2$ and $m=8$.
There are $2^7=128$ different additions on $\mathcal{A}_2^8$
arising from the 128 compositions of the number 8.

For the given composition $8=3 + 4 + 1$,
there are $\varphi(2^3)= 4$ different integer additions for the first three bits and
$\varphi(2^4)=8$ for the next four bits,
if we employ the combination of addition with automorphisms like in (\ref{eqn:sigma-addition}),
and there is just one addition for the last bit.
As a consequence,
for this particular composition of $m=8$,
there exists not only one addition of 8-bit dyadic vectors but
there are 32 different additions available due to the combination with the 4 automorphisms of the factor
$\Z/2^3\Z$ and the 8 automorphisms of $\Z/2^4\Z$.

Hence,
if we allow automorphisms of the residue groups that appear as factors in the product group (\ref{eqn:p-adic-composition}),
then from Theorem \ref{thm:p-adic} is follows that there are $3^7=2187$ different additions of 8-bit vectors available.
\end{exa}

In the case of an arbitrary integer base $b$,
the result is the following:

\begin{thm}\label{thm:b-adic}
Let $b\ge 2$ be an integer.
Then, for any $m\in \N$,
the compositions of $m$ and the automorphisms of the associated  groups of residues
generate
\[
b^m C_b (1+C_b)^{m-1}
\]
different binary operations ``+'' on $\mathcal{A}_b^m$ such that the pair
$(\mathcal{A}_b^m, +)$ is an abelian group.
Here, the number $C_b$ is defined as
\[
C_b = \prod_{i=1}^s (1-1/p_i),
\]
where $b= \prod_{i=1}^s p_i^{\alpha_i}$ is the factorization of $b$ into distinct primes $p_i$,
with $\alpha_i\in \N$, $1\le i\le s$.
\end{thm}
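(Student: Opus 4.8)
The plan is to follow the proof of Theorem~\ref{thm:p-adic} line for line, replacing the prime $p$ by the composite base $b$, and to isolate the single computation in which the factorization of $b$ intervenes. First I would fix a composition $m = t_1 + \dots + t_r$ of $m$ into $r$ parts, where $1 \le r \le m$. By \eqref{eqn:b-adic} this composition induces the group law coming from
\[
(\mathcal{A}_b^m, +) \cong (\Z/b^{t_1}\Z) \times (\Z/b^{t_2}\Z) \times \dots \times (\Z/b^{t_r}\Z),
\]
and on each cyclic factor $\Z/b^{t_j}\Z$ I may combine integer addition with an automorphism as in \eqref{eqn:sigma-addition}. Lemma~\ref{lem:automorphisms} supplies exactly $\varphi(b^{t_j})$ such choices for the $j$-th factor, and since the automorphisms on distinct factors are chosen independently, this composition contributes $\prod_{j=1}^r \varphi(b^{t_j})$ different additions on $\mathcal{A}_b^m$.

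The decisive step, and the only place where the composite nature of $b$ matters, is the evaluation of $\varphi(b^t)$. Writing $b = \prod_{i=1}^s p_i^{\alpha_i}$, I have $b^t = \prod_{i=1}^s p_i^{\alpha_i t}$ with every exponent $\alpha_i t \ge 1$, so the multiplicativity of Euler's totient function gives
\[
\varphi(b^t) = \prod_{i=1}^s \varphi(p_i^{\alpha_i t}) = \prod_{i=1}^s p_i^{\alpha_i t}\left(1 - \frac{1}{p_i}\right) = b^t \prod_{i=1}^s \left(1 - \frac{1}{p_i}\right) = b^t\, C_b .
\]
The feature that makes everything work is that the factor $C_b$ does not depend on $t$. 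Consequently the product over the $r$ factors of the chosen composition collapses cleanly, since
\[
\prod_{j=1}^r \varphi(b^{t_j}) = \prod_{j=1}^r b^{t_j} C_b = b^{\,t_1 + \dots + t_r}\, C_b^{\,r} = b^m\, C_b^{\,r},
\]
where I used $t_1 + \dots + t_r = m$. In the prime case this reduces to the term $p^m\,(\tfrac{p-1}{p})^r$ that appears in the proof of Theorem~\ref{thm:p-adic}, with $C_b = (p-1)/p$.

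Finally I would sum over all compositions, grouping them by their number of parts $r$. As noted in the proof of Theorem~\ref{thm:p-adic}, there are exactly $\binom{m-1}{r-1}$ compositions of $m$ into $r$ parts, so the total count becomes
\[
\sum_{r=1}^m \binom{m-1}{r-1}\, b^m\, C_b^{\,r} = b^m C_b \sum_{r=1}^m \binom{m-1}{r-1} C_b^{\,r-1} = b^m C_b\, (1 + C_b)^{m-1},
\]
the last equality being the binomial theorem for $(1 + C_b)^{m-1}$. Specializing to a prime $b = p$ yields $C_b = 1 - 1/p$ and recovers $(p-1)(2p-1)^{m-1}$, a reassuring consistency check against Theorem~\ref{thm:p-adic}. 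I expect no genuine obstacle here: the remark preceding the theorem cautions that a full classification of \emph{all} abelian group structures on $\mathcal{A}_b^m$ would require the Chinese Remainder Theorem in order to handle the spurious small cyclic factors produced by the Fundamental Theorem, but the statement to be proved counts only the additions generated by the compositions through \eqref{eqn:b-adic} together with their associated automorphisms, so that complication never enters. The whole difficulty is therefore concentrated in the totient identity $\varphi(b^t) = b^t C_b$ and in the observation that its $t$-independent factor $C_b$ permits exactly the same binomial collapse as in the prime case.
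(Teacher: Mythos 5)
Your proof is correct and is exactly what the paper intends: the paper's own proof of Theorem~\ref{thm:b-adic} consists of the single sentence that one translates the proof of Theorem~\ref{thm:p-adic} step by step to base $b$, and your argument carries out precisely that translation, with the identity $\varphi(b^t)=b^t C_b$ (and the $t$-independence of $C_b$) correctly isolated as the only new ingredient. The binomial collapse and the consistency check against the prime case match the paper's computation for Theorem~\ref{thm:p-adic}.
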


\begin{proof}
We translate the proof of Theorem \ref{thm:p-adic} step by step from the case of a prime base $p$ to the general base $b$.
\end{proof}

\subsection*{Acknowledgements}
The author would like to thank Markus Neuhauser (RWTH Aachen) for a most helpful remark
and the NUHAG research group at the University of Vienna for its hospitality during a stay
at the Faculty of Mathematics that proved to be most fruitful.

% -----------
% bibliography
% -----------

%\bibliographystyle{plain}
%\bibliographystyle{siam}
%
%\bibliography{../../literature/pLab,../../literature/crypt}

\def\cdprime{$''$} \def\cdprime{$''$} \def\cdprime{$''$}

\begin{small}
\noindent\textbf{Author's address:}\\
Peter Hellekalek,
Fachbereich Mathematik, Universit\"{a}t Salzburg, Hellbrunnerstr. 34, 5020 Salzburg, Austria\\
E-mail: \texttt{peter.hellekalek@sbg.ac.at}
\end{small}

\end{document}